\def\le{\leqslant}
\def\ge{\geqslant}
\def \balpha{\bm{\alpha}}
\def \bbeta{\bm{\beta}}
\begin{document}

\newtheorem{theorem}{Theorem}
\newtheorem{lemma}[theorem]{Lemma}
\newtheorem{claim}[theorem]{Claim}
\newtheorem{cor}[theorem]{Corollary}
\newtheorem{prop}[theorem]{Proposition}
\newtheorem{definition}{Definition}
\newtheorem{question}[theorem]{Open Question}
\newtheorem{example}[theorem]{Example}
\newtheorem{remark}[theorem]{Remark}

\numberwithin{equation}{section}
\numberwithin{theorem}{section}

 \newcommand{\F}{\mathbb{F}}
\newcommand{\K}{\mathbb{K}}
\newcommand{\D}[1]{D\(#1\)}
\def\scr{\scriptstyle}
\def\\{\cr}
\def\({\left(}
\def\){\right)}
\def\[{\left[}
\def\]{\right]}
\def\<{\langle}
\def\>{\rangle}
\def\fl#1{\left\lfloor#1\right\rfloor}
\def\rf#1{\left\lceil#1\right\rceil}
\def\le{\leqslant}
\def\ge{\geqslant}
\def\eps{\varepsilon}
\def\mand{\qquad\mbox{and}\qquad}

\def\Res{\mathrm{Res}}
\def\vec#1{\mathbf{#1}}

\def \vs {\vec{s}}

\def\bl#1{\begin{color}{blue}#1\end{color}} % color text blue during edits

\newcommand{\red}[1]{{\color{red}#1}}

\newcommand{\Fq}{\mathbb{F}_q}
\newcommand{\Fp}{\mathbb{F}_p}
\newcommand{\Disc}[1]{\mathrm{Disc}\(#1\)}

\newcommand{\Z}{\mathbb{Z}}
\renewcommand{\L}{\mathbb{L}}
%\newcommand{\Nm}[1]{\mathrm{Norm}_{\F{q^k/\Fq}}(#1)}

%%%%%%%%%%%%%%%%%%%%%%%%%
% Alphabet calligraphie %
%%%%%%%%%%%%%%%%%%%%%%%%%
\def\cA{{\mathcal A}}
\def\cB{{\mathcal B}}
\def\cC{{\mathcal C}}
\def\cD{{\mathcal D}}
\def\cE{{\mathcal E}}
\def\cF{{\mathcal F}}
\def\cG{{\mathcal G}}
\def\cH{{\mathcal H}}
\def\cI{{\mathcal I}}
\def\cJ{{\mathcal J}}
\def\cK{{\mathcal K}}
\def\cL{{\mathcal L}}
\def\cM{{\mathcal M}}
\def\cN{{\mathcal N}}
\def\cO{{\mathcal O}}
\def\cP{{\mathcal P}}
\def\cQ{{\mathcal Q}}
\def\cR{{\mathcal R}}
\def\cS{{\mathcal S}}
\def\cT{{\mathcal T}}
\def\cU{{\mathcal U}}
\def\cV{{\mathcal V}}
\def\cW{{\mathcal W}}
\def\cX{{\mathcal X}}
\def\cY{{\mathcal Y}}
\def\cZ{{\mathcal Z}}

\def \brho{\boldsymbol{\rho}}

\def \pf {\mathfrak p}

\def \Prob{{\mathrm {}}}
\def\e{\mathbf{e}}
\def\ep{{\mathbf{\,e}}_p}
\def\epp{{\mathbf{\,e}}_{p^2}}
\def\em{{\mathbf{\,e}}_m}

\def \F{{\mathbb F}}
\def \K{{\mathbb K}}
\def \Z{{\mathbb Z}}
\def \N{{\mathbb N}}
\def \Q{{\mathbb Q}}
\def \R{{\mathbb R}}

\def\GL{\operatorname{GL}}
\def\SL{\operatorname{SL}}
\def\PGL{\operatorname{PGL}}
\def\PSL{\operatorname{PSL}}

\def\ep{\mathbf{e}_p}
\def\eq{\mathbf{e}_q}
\def \SNQ{{\mathcal S}(N,Q)}

\def\Mob{M{\"o}bius }

%%%%%%%%%%%%%%%  Topmatter %%%%%%%%%%%%%%%%%%

\title[M\"obius Orbit at Smooth Times]{On the Dynamical System Generated by the M{\"o}bius  Transformation  at 
Smooth Times}

 \author[L. M{\'e}rai]{L{\'a}szl{\'o} M{\'e}rai}
\address{L.M.:  Department of Computer Algebra, E\"otv\"os Lor\'and University,  H-1117 Budapest, Pazm\'any P\'eter s\'et\'any 1/C, Hungary}
\email{merai@inf.elte.hu}

\author[I.~E.~Shparlinski]{Igor E. Shparlinski}
\address{I.E.S.: School of Mathematics and Statistics, University of New South Wales.
Sydney, NSW 2052, Australia}
\email{igor.shparlinski@unsw.edu.au}

\begin{abstract}
We study the distribution of the sequence of the 
first $N$ elements of the discrete 
dynamical system 
generated by the 
\Mob transformation   $x \mapsto (\alpha x + \beta)/(\gamma  x + \delta)$ 
%%map $x \mapsto \alpha x^{-1} + \beta$
 over a finite 
field of $p$ elements at the moments of time that correspond to 
$Q$-smooth numbers, that is, to numbers composed out 
of primes up to $Q$. 
In particular, we obtain nontrivial estimates of exponential sums with 
such sequences. 
\end{abstract}
 
\keywords{Inversions, dynamical system, smooth numbers}
\subjclass[2020]{11N25, 37A45}

\maketitle

%
%\tableofcontents

\section{Introduction}

\subsection{Motivation} %% and background} 
Let $p$ be a sufficiently large prime and let $\F_p$ be the field of $p$ elements
which we identify with the least residue system modulo $p$, that is, with the set
$\{0, \ldots, p-1\}$.

With any  nonsingular matrix
\begin{equation}
\label{eq:MatrM}
A = \begin{pmatrix} \alpha & \beta\\ \gamma & \delta \end{pmatrix} \in \GL_2(\F_p), 
\end{equation}
we consider the \Mob transformation  
$x \mapsto \psi(x)$ associated with $A$  where 
\begin{equation}
\label{eq:Perm}
 \psi(x)  =  \frac{\alpha x + \beta}{\gamma  x + \delta}. 
\end{equation}
Throughout the paper we always assume that 
\begin{equation}
\label{eq:c<>0}
\gamma \ne 0.
\end{equation}

Investigating the distributional properties of elements in orbits  
of the  discrete dynamical system generated by iterations of $\psi$ 
or some other polynomial or rational functions  over  finite fields and residue rings, has been a very active area of research, especially in the theory of pseudorandom number 
generators, see~\cite{BhShp, GuNiSh, MeShp20, MeShp21, MeShp24, NiSh1,NiSh2, NiWi, OstShp3, Shp} and references therein. In fact, in the theory of pseudorandom 
number generators, typically only the special case  $\psi(x) = \alpha x^{-1}  + \beta$ is considered (which is 
computationally more efficient). 
Moreover, the sequences  generated by   
iterations of  any map of the form~\eqref{eq:Perm}, as in~\eqref{eq:Gen} below, 
 can be reduced to sequences produced by this special map via 
a linear transformation, which typically does not affect their distributional and other important 
properties. Here however we prefer to consider the \Mob transformation  in the traditional  form~\eqref{eq:Perm}.

%%
%%Here we are interested in more arithmetic aspects of this problem 
%%and study the distribution of elements in orbits of the \Mob transformation 
%%at the moments of time that correspond to number theoretically interesting numbers. 

Here we are interested in more arithmetic aspects of this problem 
where one studes the distribution of elements in orbits of the \Mob transformation 
at the moments of time that correspond to number theoretically interesting sequences. 
For example, in~\cite{MeShp21} the orbits are studied at the prime moments 
of time. In this work we concentrate on smooth times for a rather high level of smoothness, 
which  looks like a harder question since the  sequences
of very smooth integers, which we consider,  are much sparse than primes. 

\subsection{Formal set-up} 
More precisely, let $u_0, u_1, \ldots$ be an orbit of the dynamical system 
generated by $\psi$ that originates at some $u_0 \in \F_p$, 
that is, 
\begin{equation}
\label{eq:Gen}
u_{n} =\psi\(u_{n-1}\),
 \qquad n =  1,2,
\ldots\,,
\end{equation}
where $u_0$ is the {\it initial value\/}, with the convention $\psi(-\delta/\gamma)=\alpha/\gamma$ which is well defined 
under the assumption~\eqref{eq:c<>0}. 
% for $c\neq 0$.

We can also write 
$$
u_n = \psi^{n}(u_0), \qquad n =  1,2,
\ldots\,,
$$
where $\psi^{0}$ is the identity map and $ \psi^{n}$
is the $n$th composition of $\psi$.

Since for any $A \in \GL_2(\F_p)$ the \Mob transformation~\eqref{eq:Perm} is reversible, 
it  is obvious that the sequence~\eqref{eq:Gen} is purely
periodic with some period $t \le p$, see~\cite{Chou,FN}
for several results about the possible values of~$t$.
 For example, it is known when such sequences
achieve the
largest possible period, which is obviously $t = p$, see~\cite{FN}.

The series of works~\cite{GuNiSh,NiSh1,NiSh2}  
 is devoted to the special case of the transformation  $\psi(x) = \alpha x^{-1}  +  \beta$ where several results 
about the distribution of elements of the sequence~\eqref{eq:Gen}
are given. Quite naturally, these results are based on bounds of exponential sums such as
\begin{equation}
\label{eq:Sing Sum}
S_{h}(N) =  \sum_{n=1}^{N} \ep\(hu_n\),
\end{equation}
where for an integer $q$ and a complex $z$ we define
$$
\eq(z) = \exp(2 \pi i z/q).
$$  
We also remark that a version of~\cite[Lemma~5.3]{AbSh} improves and generalises the bounds of~\cite{NiSh1} 
on $S_{h}(N)$, see Lemma~\ref{lem:SingSum-s} below for further generalisation which stems from~\cite[Lemma~6]{MeShp21}. It can easily be extended to  multidimensional
settings~\cite{GuNiSh} and thus has direct applications to the theory of pseudorandom number generators. 

In~\cite{MeShp21}, we have investigated  elements in orbits of the \Mob transformation at prime times and, in particular, have shown that for
$$
T_h(N) =  \sum_{\substack{\ell \le N\\\ell~\mathrm{prime}}} \ep\left(hu_\ell\right)
$$
we have 
$$
|T_h(N)| \leq Np^{-\eta} 
$$
if the period $t\geq p^{3/4+\varepsilon}$ and $p^B\leq N\leq p^C$ for some positive real numbers $\varepsilon, B,C$, where $\eta>0$ may depend on these parameters and $p$ is sufficiently large. 

In this paper,  we study the distribution of trajectories of the \Mob transformation
at the moments of time that correspond to  {\it $Q$-smooth\/} numbers,
where as usual, we say that an integer $n$ is $Q$-smooth if
the largest  prime divisor $P(n)$ of $n$ satisfies $P(n) \le Q$. Let $\SNQ$ be the set of $Q$-smooth numbers up to $N$,
$$
\SNQ = \{n \le N~:~ n\ \text{is $Q$-smooth}\}.
$$
We recall, that for the number $\Psi(N,Q)=\#\SNQ $ of $Q$-smooth numbers up to $N$ we have
$$
  \Psi(N,Q)=  N\rho(u)\left(1+O\left(\frac{\log(u+1)}{\log Q}\right)\right),
$$
  where, as usual,
$$
 u=\frac{\log N}{\log Q}, 
$$
and $\rho(u)$ is the so-called \textit{Dickman's} functions satisfying
$$
\rho(u)=\left(\frac{e+o(1)}{u\log u}\right)^u \quad \text{as } u\rightarrow \infty,
$$ 
in the range
$$
Q>\exp\left((\log \log N)^{5/3+\varepsilon}\right),
$$
or, alternatively, $1\leq u\leq \exp\left((\log Q)^{3/5-\varepsilon}\right)$, with any fixed $\varepsilon > 0$, 
see~\cite{Ten} for more details.

It is also useful to recall that if $Q = (\log N)^{A+o(1)}$ for some constant $A>1$ then 
\begin{equation}\label{eq:psi-small Q}
  \Psi(N,Q)=  N^{1-1/A + o(1)}, 
  \end{equation}
  see, for instance,~\cite[Equation~(1.14)]{Gran}, 

Our goal is to investigate the exponential sum
$$
%\label{eq:Smooth Sum}
T_h(N,Q) =  \sum_{n \in \SNQ} \ep\(hu_n\).
$$

We also note that the results of~\cite{BFGS} can be considered 
as results on the behaviour at smooth moments of time of the 
dynamical system generated by the linear transformation 
$w \mapsto gw$ on $\F_p$, that is, of the sequence $u_0g^s$, 
where $n$ runs through the set $\SNQ$.

\subsection{Our results}

We establish the following upper bound on the
sums $T_h(N, Q)$, which is nontrivial in a wide range of parameters.

%Our main result is the following bound:
%%
%%\begin{theorem}
%%\label{thm:SmoothSum1}  
%%Let $B \ge 1$ and  $\varepsilon>0$ be  arbitrary real numbers. 
%%Assume, that the period $t$ of the sequence~\eqref{eq:Gen}  satisfies 
%%$t  \ge p^{1/2+\varepsilon}$ and $N\leq p^B$.
%%If    
%%$$
%%N \ge Q  p^{\varepsilon}
%%$$   
%%then, for any $h \in \Fp^*$, we have
%%$$
%%T_h(N,Q) \le N p^{-\eta} 
%%$$
%%where  $\eta> 0$ depends only on $A$ and $\varepsilon$. 
%%\end{theorem}

\begin{theorem}
\label{thm:SmoothSum3}
For any  $\varepsilon>0$ and $B \ge 1$, there exists a $\delta>0$ with the following property. Assume that the period $t$ of the sequence~\eqref{eq:Gen} satisfies 
$t  \ge Q p^{1/2+\varepsilon}$. Assume also that % $p^B\geq N \geq p^{1/2+\varepsilon}$.
$p^B\geq N \geq Q^2 p^{1/2+ \varepsilon }$. 
Then for
 any $h \in \Fp^*$, we have
$$
T_h(N,Q) \leq c  N^{1-\delta}Q %% (\log p)^{5/2}
$$
where $c$  and $\delta$ may depend only on  $B$ and $\varepsilon$. 
\end{theorem}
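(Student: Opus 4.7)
The plan is to exploit the multiplicative structure of the iteration, $u_{dm} = (\psi^{d})^{m}(u_0)$, combined with the identity $\log n = \sum_{d \mid n} \Lambda(d)$ restricted to $Q$-smooth $n$. The key observation is that if $n \in \SNQ$ and $p^k \mid n$ with $\Lambda(p^k)\neq 0$, then automatically $p \leq Q$. This gives the bilinear representation
\begin{equation*}
\sum_{n \in \SNQ} \ep(h u_n)\, \log n \;=\; \sum_{p \leq Q}\log p \sum_{k \geq 1} \sum_{m \in \cS(N/p^k, Q)} \ep\bigl(h u_{p^k m}\bigr).
\end{equation*}
A dyadic reduction to $n \in (N/2, N]$ (where $\log n \asymp \log N$), together with partial summation, translates this weighted sum into a bound on $T_h(N, Q)$ itself.

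The crucial point is that for each $d = p^k$, the inner sum is an exponential sum along the Möbius trajectory $m \mapsto u_{dm}$ generated by the \emph{new} transformation $\widetilde{\psi} := \psi^{d}$, whose period equals $t/\gcd(t, d)$ and is therefore of order $t$ for all but a negligible set of $d$ in the relevant range (since $d \leq N \leq p^{B}$). I would then split the range of $d$ at a threshold $D$ to be optimised. For large $d \geq D$, the set $\cS(N/d, Q)$ is short and the single-trajectory bound of Lemma~\ref{lem:SingSum-s} applied to $\widetilde{\psi}$ yields direct cancellation. For small $d < D$, I would apply the Cauchy--Schwarz inequality in the variable $m$ to reduce to a quadratic sum
\begin{equation*}
\sum_{m_1, m_2} \ep\bigl(h(u_{d m_1} - u_{d m_2})\bigr),
\end{equation*}
which, after the shift $m_2 = m_1 + j$, is a sum of differences along another Möbius trajectory and is bounded by a further application of Lemma~\ref{lem:SingSum-s} or by a Weil-type estimate on the associated composite map.

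Summing the contributions over primes $p \leq Q$ and prime-power exponents $k$ produces the factor $Q$ in the stated bound, and balancing the two regimes of $d$ by an optimal choice of $D$ yields the power saving $N^{-\delta}$. The hypotheses $t \geq Q p^{1/2+\varepsilon}$ and $N \geq Q^{2} p^{1/2+\varepsilon}$ are exactly what is needed for both the single-trajectory estimate and the Weil-type bound inside the Cauchy--Schwarz step to be nontrivial throughout the admissible range of $d$, while also keeping the period of $\widetilde{\psi}$ usable.

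The main obstacle I anticipate lies in the small-$d$ regime: the quadratic sum above must be controlled uniformly in $d$, with enough cancellation to absorb the $D^{1/2}$-type loss from Cauchy--Schwarz. A secondary but routine difficulty is the careful bookkeeping of the dyadic decomposition and the tail in $k$ coming from higher prime powers, which is kept small by the rapid decay of $Q$-smooth numbers in short ranges.
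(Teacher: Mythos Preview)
Your decomposition via $\log n=\sum_{d\mid n}\Lambda(d)$ is correct, but the plan breaks down at the inner sum: for each $d=p^{k}$ you must estimate $\sum_{m\in\cS(N/d,Q)}\ep(h u_{dm})$, and in both regimes your proposed tools require $m$ to range over an \emph{interval}, not over smooth numbers. Lemma~\ref{lem:SingSum-s} bounds $\sum_{n=1}^{M}\ep(\cdots)$, so it does not apply to $\sum_{m\in\cS(N/d,Q)}$; and in the Cauchy--Schwarz step the shift $m_{2}=m_{1}+j$ is only useful when $m_{1},m_{2}$ run over an interval, whereas for smooth $m_{1},m_{2}$ the difference $j$ is unconstrained and no ``differenced'' M\"obius trajectory emerges. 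In short, you have reduced the problem to another exponential sum over smooth numbers of the same shape as $T_h$ itself, and neither proposed device gives cancellation there.

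The paper sidesteps this by using the Vaughan-type factorisation $n=rs$ with $L/Q\le r<L$, $P(r)\le p(s)$, $rp(s)\ge L$, and then invoking a \emph{bilinear} bound (Lemma~\ref{lem:BilinExp}, through Lemmas~\ref{lem:DoubleSum2} and~\ref{lemma:hyper}) valid for \emph{arbitrary} bounded weights $\alpha_{k},\beta_{m}$. The smoothness and least/greatest-prime-factor conditions are absorbed into these weights, so no interval structure on either variable is needed. The Vaughan splitting also puts $r$ in a localised range $[L/Q,L)$ with $L=Qp^{\varepsilon/4}$, giving the balance that makes the bilinear estimate nontrivial; your $d=p^{k}$ variable is a thin prime-power set with essentially no localisation, so even if you swapped in the bilinear lemma the bound would be inefficient. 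The missing idea, then, is a Type~II estimate with general coefficients together with a factorisation that controls the size of one factor.
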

We remark that one can choose any fixed $\delta > \varepsilon/(8B)$  in Theorem~\ref{thm:SmoothSum3}. 
Furthermore, we see from  the bound~\eqref{eq:psi-small Q} that there are 
$\eta> 0$, $\kappa> $ and $A>1$, depending only on  $B$ and $\varepsilon$, such that 
under the conditions of Theorem~\ref{thm:SmoothSum3} we have 
$$
T_h(N,Q) \leq c \Psi(N,Q)^{1-\eta}
$$
provided
$$
N^\kappa \ge Q\ge (\log N)^A.
$$

\bigskip

Throughout the paper, the implied constants in 
the symbols `$O$' and `$\ll$'
may occasionally, where obvious, depend on 
the  matrix $A$ and real positive parameters $\varepsilon$, and are absolute otherwise
(we recall that $U \ll V$  is equivalent
to $U = O(V)$).

For any sequence $\boldsymbol{\alpha}=(\alpha_k)_{k=1}^K$ of complex numbers, we write 
$$
\|\boldsymbol{\alpha} \|_\infty=\max_{k\le K}|\alpha_k|.
$$

 \section{Some Single and Double  Exponential Sums} 
 
 \subsection{Bounds on single sums}
 
We have the following bound, given by~\cite[Lemma~6]{MeShp21}, 
which is a generalisation of~\cite[Lemma~5.3]{AbSh}, which in turn 
improves and generalises the bound of~\cite[Theorem~1]{NiSh1}. 

\begin{lemma}
\label{lem:SingSum-s}  Assume that the characteristic polynomial of the matrix $A$ given by~\eqref{eq:MatrM} has two distinct roots in $\F_{p^2}$. 
 Let $t$ be the period of the sequence~\eqref{eq:Gen}.
For any integer numbers $N,K \geq 1$, $s\geq 2$ and $M \ge m_{s} > \ldots > m_1 \ge 1$, 
uniformly over $a_1, \ldots, a_s \in \F_p$ not all zeros, we have
$$
 \sum_{n=1}^{N} \ep\(  a_1  u_{m_1 n}+ \ldots + a_s  u_{m_s n}\) 
 \ll  sM\left(1+\frac{N}{t}\right)p^{1/2} \log p .
$$
\end{lemma}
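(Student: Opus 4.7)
The plan is to use a divisor-trick decomposition to reduce $T_h(N,Q)$ to a bilinear sum of the form $\sum_d\sum_m \ep(h u_{dm})$, and then combine Cauchy--Schwarz with the $s=2$ case of Lemma~\ref{lem:SingSum-s}. The $Q$-factor in the target bound corresponds exactly to the length of the range $(M,MQ]$ produced by the divisor trick. Fix a parameter $M$ to be chosen later (roughly of order $N^{1/2+\delta}p^{1/4}/Q^{1/2}$, modified when $MQ$ exceeds $t$). Every $Q$-smooth integer $n>M$ has a divisor $d\in(M,MQ]$: greedily multiply the prime factors of $n$ until the product first exceeds $M$; since the final multiplied prime is $\le Q$, the product stays in $(M,MQ]$. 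The contribution from $n\le M$ is bounded trivially by $M$, and after selecting a canonical such divisor (say the smallest one exceeding $M$), we obtain a bilinear representation of $T_h(N,Q)$ whose coefficients, after dropping the canonicity condition, have size at most $\tau(n)\le N^{o(1)}$.

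Applying Cauchy--Schwarz to the outer $d$-sum, expanding the square and swapping the summation,
$$
|T_h(N,Q)|^2 \ll M^2 + N^{o(1)}\,MQ \sum_{m_1,m_2 \in S(N/M,Q)} \Bigl|\sum_{\substack{M<d\le MQ \\ d\max(m_1,m_2)\le N}} \ep\bigl(h u_{dm_1}-h u_{dm_2}\bigr)\Bigr|.
$$
The diagonal $m_1=m_2$ contributes $\ll MQ\sum_d \Psi(N/d,Q)\ll NQ\cdot N^{o(1)}$. For each off-diagonal pair, Lemma~\ref{lem:SingSum-s} with $s=2$, coefficients $(h,-h)$, and shifts $m_1,m_2\le N/M$ gives a bound of order $(N/M)(1+MQ/t)p^{1/2+o(1)}$ per pair. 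Summing over at most $(N/M)^2$ pairs, the off-diagonal part is $\ll (N/M)^3(1+MQ/t)p^{1/2+o(1)}$. Altogether,
$$
|T_h(N,Q)|^2 \ll N^{o(1)}\biggl(MNQ + \frac{Q(1+MQ/t)\,N^3 p^{1/2}}{M^2}\biggr).
$$

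To match the target $N^{2-2\delta}Q^2$, the first term requires $M\le N^{1-2\delta}Q$; the second, in the regime $MQ\le t$, requires $M^2\ge N^{1+2\delta}p^{1/2}/Q$, and in the regime $MQ>t$ it requires $M\ge N^{1+2\delta}p^{1/2}/t$. A valid $M$ exists precisely when $N^{4\delta}\le Q^2 p^\varepsilon$, which holds for $\delta<\varepsilon/(4B)$ given $N\le p^B$ — consistent with the remark $\delta>\varepsilon/(8B)$ after the theorem statement. The hypothesis $t\ge Qp^{1/2+\varepsilon}$ keeps $1+MQ/t$ controlled in the relevant regime, and $N\ge Q^2 p^{1/2+\varepsilon}$ ensures that $N/M$ is comfortably large so that the off-diagonal pair count $(N/M)^2$ is meaningful.

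The main technical obstacle is the clean handling of the divisor-trick multiplicity: ensuring that the resulting bilinear sum indeed represents $T_h(N,Q)$ with coefficients bounded by $N^{o(1)}$, rather than accumulating additional factors of $Q$ or $\log Q$. This can be done either by a Möbius-type inclusion-exclusion argument singling out the canonical divisor, or by bounding the divisor counting function $\#\{d\mid n:M<d\le MQ\}\le\tau(n)=N^{o(1)}$ for $Q$-smooth $n\le N$ and absorbing the resulting overcount into the $N^{o(1)}$ slack built into the target.
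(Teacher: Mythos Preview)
Your proposal does not address the stated lemma at all. Lemma~\ref{lem:SingSum-s} is a bound on the single sum
\[
\sum_{n=1}^{N}\ep\bigl(a_1 u_{m_1 n}+\cdots+a_s u_{m_s n}\bigr),
\]
and the paper does not prove it here: it is simply quoted from~\cite[Lemma~6]{MeShp21}. What you have written is a proof sketch for Theorem~\ref{thm:SmoothSum3}, the estimate on $T_h(N,Q)$, which \emph{uses} Lemma~\ref{lem:SingSum-s} (indirectly, through Lemmas~\ref{lem:BilinExp}--\ref{lemma:hyper}) rather than establishing it.

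Even read as an attempt at Theorem~\ref{thm:SmoothSum3}, your route differs from the paper's and has an unresolved step. The paper uses Vaughan's factorisation $n=rs$ with $P(r)\le p(s)$ (see~\eqref{eq:rs}); after fixing the prime $q=P(r)$, the remaining conditions on $r$ and on $s$ are genuinely separable, and the coupling $rs\le N$ is absorbed by the hyperbolic-region bound of Lemma~\ref{lemma:hyper}. Your divisor trick $d\in(M,MQ]$ does not produce a separable bilinear form: the canonicity condition ``$d$ is the smallest divisor of $n$ exceeding $M$'' ties $d$ to $m=n/d$. You cannot simply ``drop'' it and invoke $\tau(n)\le N^{o(1)}$, because the exponential weights are complex, so overcounting does not dominate the original sum; the displayed inequality after Cauchy--Schwarz therefore does not follow from what precedes it. This is exactly the obstacle the Vaughan-type ordering is designed to avoid, and your final paragraph acknowledges the difficulty without actually resolving it.
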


We now immediately derive

\begin{cor}
\label{cor:SingSum} For any $\varepsilon > 0$ if the period $t$ of the sequence~\eqref{eq:Gen}
satisfies $t \ge p^{1/2+\varepsilon}$,  then for  arbitrary $N\ge p^{1/2+\varepsilon}$ 
and  $h \not \equiv 0 \pmod p$,  we have
$$
  S_{h}(N)   \ll N p^{-\varepsilon/2} .
$$
\end{cor}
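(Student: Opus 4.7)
The plan is to deduce Corollary~\ref{cor:SingSum} as a direct specialisation of Lemma~\ref{lem:SingSum-s} to a single-frequency situation. Concretely, I would apply Lemma~\ref{lem:SingSum-s} with $s = 2$, $m_1 = 1$, $m_2 = 2$ (so $M = 2$), $a_1 = h$ and $a_2 = 0$. Then $(a_1, a_2)$ is not all zero because $h \not\equiv 0 \pmod p$, and the sum on the left-hand side of Lemma~\ref{lem:SingSum-s} collapses to
$$
\sum_{n=1}^{N}\ep(hu_n) = S_h(N).
$$
The lemma thus yields
$$
|S_h(N)| \ll \(1+\frac{N}{t}\)p^{1/2}\log p.
$$

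It then only remains to show that, under the hypotheses $t \ge p^{1/2+\varepsilon}$ and $N \ge p^{1/2+\varepsilon}$, the right-hand side is $O(Np^{-\varepsilon/2})$. I would split according to whether $N \le t$ or $N > t$. In the first case $1 + N/t \le 2$, so the bound is $O(p^{1/2}\log p)$; since $N \ge p^{1/2+\varepsilon}$, this is majorised by $Np^{-\varepsilon}\log p \le Np^{-\varepsilon/2}$ for all sufficiently large $p$. In the second case $1 + N/t \le 2N/t$, and the bound becomes $O(Np^{1/2}\log p/t) \le Np^{-\varepsilon}\log p \le Np^{-\varepsilon/2}$ by the lower bound on $t$. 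Either way the claim follows.

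The only potentially delicate point is the hypothesis in Lemma~\ref{lem:SingSum-s} that the characteristic polynomial of $A$ have two distinct roots in $\F_{p^2}$; when $A$ has a repeated eigenvalue and $\gamma \ne 0$, the M\"obius transformation $\psi$ is conjugate to a translation on $\F_p$, and $S_h(N)$ reduces to a geometric sum that is immediately bounded by $O(p)$, which is absorbed into the claim in the nontrivial range of $N$ (this edge case seems to be assumed away implicitly in the statement of the corollary). I therefore expect no substantive obstacle beyond the bookkeeping needed to balance the two summands $1$ and $N/t$ against the target gain $p^{-\varepsilon/2}$.
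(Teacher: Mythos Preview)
Your main argument is correct and is essentially the paper's proof: both apply Lemma~\ref{lem:SingSum-s} in the single-frequency situation and then use the hypotheses $N,t\ge p^{1/2+\varepsilon}$ to convert $p^{1/2}\log p$ into $Np^{-\varepsilon/2}$. The only cosmetic difference is that in the range $N>t$ the paper first splits $S_h(N)$ by periodicity into $O(N/t)$ complete periods plus a tail of length $<t$ and invokes the lemma on each piece, whereas you invoke the lemma once and let its built-in factor $1+N/t$ do exactly the same job; the two computations are equivalent.

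Your aside about the repeated-eigenvalue case is not right, however. When $A$ has a single Jordan block and $\gamma\ne 0$, the conjugating map $\varphi$ cannot be affine (otherwise $\psi$ itself would be affine, forcing $\gamma=0$), so $u_n=\varphi^{-1}(v_0+nc)$ makes $S_h(N)$ an incomplete sum of $\ep$ applied to a genuine degree-one rational function of $n$, not a geometric sum. Moreover an $O(p)$ bound would be useless at the bottom of the allowed range $N\approx p^{1/2+\varepsilon}$, where the target $Np^{-\varepsilon/2}\approx p^{1/2+\varepsilon/2}$ is much smaller than $p$. The corollary as stated simply inherits the distinct-roots hypothesis of Lemma~\ref{lem:SingSum-s}; the paper does not treat the degenerate case separately.
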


\begin{proof} If $N \le t$ then by Lemma~\ref{lem:SingSum-s} we have 
$S_{h}(N)   \ll   p^{1/2}  \log p  \ll N p^{-\varepsilon/2}$ 

If $N > t$ then, using the periodicity of  the sequence~\eqref{eq:Gen},
we split the sum $S_{h}(N)$ into $O(N/t)$ sums $S_{h}(t)$ of length $t$ and one sum
$S_{h}(M)$
of length $M < t$. Using Lemma~\ref{lem:SingSum-s} we obtain $S_{h}(t) \ll  t p^{-\varepsilon/2}$
and also $S_{h}(M)  \ll  p^{1/2}  \log p \ll  t p^{-\varepsilon/2}$.
The result now follows.
\end{proof}

 \subsection{Bounds on double  sums}
We have the following bound which is essentially~\cite[Lemma~8]{MeShp21}. 
 We also formulate it in a slightly more precise form with 
$(\log p)^{1/2}$ as the proof actually gives  instead of  $p^{o(1)}$ 
as presented in~\cite[Lemma~8]{MeShp21}.

\begin{lemma}\label{lem:BilinExp}
 Assume that the characteristic polynomial of the matrix $A$ given by~\eqref{eq:MatrM} has two distinct roots in $\F_{p^2}$. 
 Let $t$ be the period of the sequence~\eqref{eq:Gen}.
For any    integers $M, K \ge 1$ and  any   sequences $\boldsymbol{\alpha}=(\alpha_k)_{k=1}^K$
and $\boldsymbol{\beta}=(\beta_m)_{m=1}^{M}$ of complex numbers with $\|\balpha \|_\infty,  \|\bbeta \|_\infty \le 1$, 
 uniformly over $h\in \F_p^*$,  we have
\begin{align*}
& \sum_{k=1}^{K} \sum_{m=1}^{M}
\alpha_k\,\beta_m \,\ep(h u_{k m}) \\
 & \qquad   \quad  \ll 
KM    \(M^{-1/2} + K^{-1/2}M^{1/2} p^{1/4} + M^{1/2} p^{1/4} t^{-1/2}    \)(\log p)^{1/2}.
\end{align*}
\end{lemma}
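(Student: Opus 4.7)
The plan is to reduce the double sum to single sums by a standard Cauchy--Schwarz in the outer variable $k$, and then apply Lemma~\ref{lem:SingSum-s} with $s = 2$ to the resulting single sums over $k$.

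Denote the sum by $W$. Applying Cauchy--Schwarz in $k$ and using $\|\balpha\|_\infty \le 1$,
$$
|W|^2 \le K \sum_{k=1}^{K} \left| \sum_{m=1}^{M} \beta_m\, \ep(h u_{km}) \right|^2.
$$
Expanding the inner square and interchanging the order of summation produces the diagonal contribution (with $m_1 = m_2$), which is trivially $O(KM)$ using $\|\bbeta\|_\infty \le 1$, together with an off-diagonal contribution of the shape
$$
\sum_{1 \le m_1 \ne m_2 \le M} \beta_{m_1} \overline{\beta_{m_2}} \sum_{k=1}^{K} \ep\!\left(h u_{k m_1} - h u_{k m_2}\right).
$$

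Next, I would apply Lemma~\ref{lem:SingSum-s} with $s = 2$, $a_1 = h$, $a_2 = -h$, and the two indices $\min(m_1,m_2) < \max(m_1, m_2) \le M$; this is legitimate because $h \in \F_p^*$ so $a_1, a_2$ are not both zero. The lemma then bounds each inner sum by $O\!\left(M(1 + K/t)\, p^{1/2}\log p\right)$, which gives an $O\!\left(M^3 (1 + K/t)\, p^{1/2} \log p\right)$ estimate on the entire off-diagonal contribution. Assembling the two pieces,
$$
|W|^2 \ll K^2 M + K M^3 (1 + K/t)\, p^{1/2} \log p.
$$
Taking square roots, invoking the elementary inequality $(1 + K/t)^{1/2} \le 1 + (K/t)^{1/2}$, and factoring $KM$ out of each resulting summand yields exactly the three terms of the claim; the $(\log p)^{1/2}$ factor attached to the first term is absorbed by the fact that $(\log p)^{1/2} \ge 1$.

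The argument contains no serious technical obstacle: it is a textbook Cauchy--Schwarz reduction combined with the already-established single-sum bound of Lemma~\ref{lem:SingSum-s}. The only points that require genuine care are (i) checking that the hypothesis on the characteristic polynomial of $A$, inherited in the statement of Lemma~\ref{lem:BilinExp}, supplies exactly what Lemma~\ref{lem:SingSum-s} requires, and (ii) keeping track of the arithmetic so that the final estimate splits cleanly into the three advertised summands with the stated logarithmic factor.
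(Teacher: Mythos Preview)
Your argument is correct. The paper itself does not supply a proof of this lemma; it simply records it as essentially \cite[Lemma~8]{MeShp21}, with the remark that the proof there already yields the sharper factor $(\log p)^{1/2}$ in place of $p^{o(1)}$. Your Cauchy--Schwarz reduction in $k$, followed by Lemma~\ref{lem:SingSum-s} with $s=2$ on the off-diagonal, is exactly the standard route to such bilinear bounds and recovers the three advertised terms with the $(\log p)^{1/2}$ factor, so your write-up is in fact a faithful reconstruction of what the cited proof must do.
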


We now estimate double sums with variables limits of summation for
one variable.

\begin{lemma}\label{lem:DoubleSum2}
Let $K,M$ be positive integers and let $t$ be the period of the sequence~\eqref{eq:Gen}. 
Let $(L_m)$ and $(K_m)$ be sequences  
of nonnegative integer numbers  with $$
L_m<K_m< K.
$$
If the characteristic polynomial of the matrix $A$ given by~\eqref{eq:MatrM} has two distinct roots in $\F_{p^2}$, then
for any  sequences $\boldsymbol{\alpha}=(\alpha_k)_{k=1}^K$
and $\boldsymbol{\beta}=(\beta_m)_{m=1}^{M}$ of complex numbers with $\|\balpha \|_\infty,  \|\bbeta \|_\infty \le 1$, 
 uniformly over $h\in \F_p^*$,  we have
\begin{align*}
 \sum_{m =1}^{M}& \sum_{L_m<k\le K_m}
\alpha_k\,\beta_m \,\ep(h u_{k m})\\
&  
\ll
KM  \(M^{-1/2} + K^{-1/2}M^{1/2} p^{1/4} + M^{1/2} p^{1/4} t^{-1/2}    \)(\log p)^{1/2}\log K. 
\end{align*}
\end{lemma}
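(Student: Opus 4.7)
The plan is to reduce the sum with variable inner range $L_m < k \le K_m$ to the fixed rectangular sum already handled by Lemma~\ref{lem:BilinExp}, paying only a logarithmic factor. The standard tool is Fourier expansion of the indicator $\mathbf{1}_{L_m < k \le K_m}$ on the cyclic group $\Z/K\Z$ (completion technique).

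First, for $k \in \{1, \ldots, K-1\}$ I would write
$$
\mathbf{1}_{L_m < k \le K_m}(k) = \sum_{j=0}^{K-1} \gamma_j(m)\, e_K(jk),
\qquad
\gamma_j(m) = \frac{1}{K}\sum_{l=L_m+1}^{K_m} e_K(-jl),
$$
and record the classical estimate on geometric sums, which gives $|\gamma_0(m)| \le 1$ and $|\gamma_j(m)| \ll 1/\min(j, K-j)$ for $1 \le j \le K-1$, uniformly in $m$. In particular,
$$
\sum_{j=0}^{K-1}\max_{1 \le m \le M}|\gamma_j(m)| \ll \log K.
$$

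Inserting this expansion and swapping the order of summation, the sum $T$ of the lemma becomes
$$
T = \sum_{j=0}^{K-1} \sum_{m=1}^{M} \beta_m \gamma_j(m) \sum_{k=1}^{K-1} \alpha_k\, e_K(jk)\, e_p(h u_{km}).
$$
For each fixed $j$, introduce $C_j = \max_m |\gamma_j(m)|$ and set $\widetilde\alpha_k = \alpha_k e_K(jk)$, $\widetilde\beta_m = \beta_m \gamma_j(m)/C_j$; both sequences have $\ell^\infty$ norm bounded by $1$. Applying Lemma~\ref{lem:BilinExp} to the inner double sum with these weights and with $K$ replaced by $K-1$ yields
$$
\Bigl|\sum_m \widetilde\beta_m \sum_k \widetilde\alpha_k\, e_p(h u_{km})\Bigr|
\ll KM\bigl(M^{-1/2} + K^{-1/2}M^{1/2} p^{1/4} + M^{1/2} p^{1/4} t^{-1/2}\bigr)(\log p)^{1/2}.
$$
Undoing the normalization and summing over $j$ contributes the factor $\sum_j C_j \ll \log K$, which produces exactly the claimed bound.

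No real obstacle is expected beyond bookkeeping: the only non-trivial point is the uniform $1/|j|$ bound on the Fourier coefficients $\gamma_j(m)$ together with the fact that taking $\max_m$ inside the sum over $j$ still costs only $\log K$ (because the $|j|$-decay is uniform in $m$). Everything else is a direct substitution into Lemma~\ref{lem:BilinExp}.
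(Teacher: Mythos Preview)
Your proposal is correct and follows essentially the same approach as the paper: both use the completion technique (Fourier expansion of the indicator $\mathbf{1}_{L_m<k\le K_m}$ on $\Z/K\Z$), absorb the Fourier phase into $\alpha_k$ and the normalized Fourier coefficient into $\beta_m$, apply Lemma~\ref{lem:BilinExp}, and sum over frequencies at a cost of $\log K$. The only cosmetic differences are that the paper indexes frequencies by $r\in[-K/2,K/2)$ and normalizes via $\eta_{m,r}$ with $\sum_s e_K(-rs)=\eta_{m,r}K/(r+1)$, whereas you use $j\in\{0,\ldots,K-1\}$ and normalize by $C_j=\max_m|\gamma_j(m)|$.
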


\begin{proof}
Write 
$$
S=\sum_{m =1}^{M} \sum_{L_m<k\le K_m}
\alpha_k\,\beta_m \,\ep(h u_{k m}).
$$
Using the orthogonality of exponential functions, for each inner sums we have
\begin{align*}
\sum_{L_m<k\le K_m} & \alpha_k \,\ep(h u_{k m})\\
&= \sum_{k\leq K}\alpha_k \sum_{L_m<s\leq K_m} \ep(h u_{s m})
\frac{1}{K} \sum_{-K/2 \leq r< K/2} \e_K(r(k-s))\\
&=\frac{1}{K} \sum_{-K/2 \leq r< K/2} 
\sum_{L_m<s\leq K_m} \e_K(-rs)\sum_{k\leq K}\alpha_k\ep(h u_{s m})\e_K(rk).
\end{align*}  
Here, for each $k\leq K$ and every integer $-K/2 \leq r< K/2$ we have by~\cite[Bound~(8.6)]{IwKow}, that
$$
\sum_{L_m<s\leq K_m} \e_K(-rs) \ll \frac{K}{r+1}.
$$
Let $\eta_{m,r}\ll 1$ be the complex number such that
$$
\sum_{L_m<s\leq K_m} \e_K(-rs) =\eta_{m,r} \frac{K}{r+1}.
$$
Thus
$$
S=
 \sum_{-K/2 \leq r< K/2}  \frac{1}{r+1}
\sum_{m =1}^{M} \sum_{k\le K}
\tilde{\alpha}_k\,\tilde{\beta}_m \,\ep(h u_{k m})
$$
with
$$
\tilde{\alpha}_k=\alpha_k \e_K(rk)
\mand 
\tilde{\beta}_m=\beta_m\eta_{m,r}  .
$$
As
$$
 \sum_{-K/2 \leq r< K/2}  \frac{1}{r+1}\ll \log K,
$$
Lemma~\ref{lem:BilinExp} yields
$$
S\ll
%%\|\balpha \|_\infty \|\bbeta \|_\infty 
KM   
\(M^{-1/2} + K^{-1/2}M^{1/2} p^{1/4} + M^{1/2} p^{1/4} t^{-1/2}    \)(\log p)^{1/2}\log K, 
$$
which concludes the proof. 
\end{proof}

We also need the following bound on double exponential sums over certain 'hyperbolic'
regions. 

\begin{lemma}\label{lemma:hyper}
Let $H, K, M$ be positive integer numbers with $H<M$ and let $t$ be the period of the 
sequence~\eqref{eq:Gen}. Let $(L_m)$  be a sequences of nonnegative integer numbers. 
If the characteristic polynomial of the matrix $A$ given by~\eqref{eq:MatrM} has two 
distinct roots in $\F_{p^2}$, then
for any  sequences $\boldsymbol{\alpha}=(\alpha_k)_{k=1}^K$
and $\boldsymbol{\beta}=(\beta_m)_{m=1}^{M}$ of complex numbers with $\|\balpha \|_\infty,  \|\bbeta \|_\infty \le 1$, 
 uniformly over $h\in \F_p^*$,  we have
\begin{align*}
\sum_{m =H}^{M}  &\sum_{L_m<k\le K/m}
\alpha_k\,\beta_m \,\ep(h u_{k m})
\\
& 
\ll K\left(
H^{-1/2}
+
M K^{-1/2}p^{1/4}
+
M^{1/2}p^{1/4}t^{-1/2}
\right) (\log p)^{1/2} %%\log M
\log K. 
\end{align*} 
\end{lemma}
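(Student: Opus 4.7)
The plan is to reduce to Lemma~\ref{lem:DoubleSum2} via a dyadic decomposition of the outer variable $m$. I would partition the range $H\le m\le M$ into the $O(\log(M/H))$ dyadic blocks
$$
\mathcal{I}_i = \bigl\{m\in\Z: M_i \le m < 2M_i\bigr\}, \qquad M_i = 2^i H, \quad i = 0,1,\ldots,\bigl\lceil \log_2(M/H)\bigr\rceil,
$$
truncated at $M$. On each block $\mathcal{I}_i$ the inner variable satisfies $L_m < k \le \lfloor K/m\rfloor \le \lfloor K/M_i\rfloor$, so the restricted sum fits exactly the framework of Lemma~\ref{lem:DoubleSum2}.

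Next I would apply Lemma~\ref{lem:DoubleSum2} to the $\mathcal{I}_i$-block with the outer parameter $M$ there replaced by $M_i$ and the inner parameter $K$ there replaced by $K'_i = \lfloor K/M_i\rfloor + 1$, so the condition $K_m < K'_i$ holds for every $m$ in the block. Substituting $K'_i$ and $M_i$ into the three terms of that lemma gives contributions of order
$$
K M_i^{-1/2}, \qquad K^{1/2} M_i p^{1/4}, \qquad K M_i^{1/2} p^{1/4} t^{-1/2},
$$
each multiplied by $(\log p)^{1/2} \log K'_i \le (\log p)^{1/2} \log K$.

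Summing these contributions over the dyadic scales $M_i$, the first term produces a decreasing geometric series bounded by $O(K H^{-1/2})$, while the second and third produce increasing geometric series bounded by $O(K^{1/2} M p^{1/4})$ and $O(K M^{1/2} p^{1/4} t^{-1/2})$ respectively, each with only an implied absolute constant (the $O(\log(M/H))$ number of scales being absorbed into the $\log K$ factor already present). Factoring out $K$ and combining yields
$$
K\left(H^{-1/2} + M K^{-1/2} p^{1/4} + M^{1/2} p^{1/4} t^{-1/2}\right)(\log p)^{1/2} \log K,
$$
which is the claimed bound. I do not expect any serious obstacle: the only delicate point is verifying that the "hyperbolic" range $k \le K/m$ genuinely reduces to the variable-limit setting of Lemma~\ref{lem:DoubleSum2} on each dyadic piece, which is immediate from the bound $K/m \le K/M_i$ for $m\in\mathcal{I}_i$.
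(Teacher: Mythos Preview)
Your proposal is correct and follows essentially the same argument as the paper: a dyadic (there base-$e$) decomposition of the outer variable $m$, an application of Lemma~\ref{lem:DoubleSum2} on each block with parameters $(K/M_i, M_i)$, and summation of the resulting geometric series in $M_i$. The only cosmetic difference is that the paper takes the block parameter to be the upper endpoint (so $2M_i$ rather than $M_i$), which of course only affects implied constants.
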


\begin{proof}
Let 
$$
I =  \fl{ \log H}   -1  \mand J = \rf{\log M}.
$$ 
By setting $\beta _m=0$ for $m<H$ and $m>M$, we have
\begin{align*}
\sum_{m =H}^{M}  \sum_{L_m<k\le K/m}&
\alpha_k\,\beta_m \,\ep(h u_{k m})
\\
&=\sum_{I \leq j\leq J  } 
\sum_{e^j< m \leq e^{j+1} } 
\sum_{L_m<k\le K/m}
\alpha_k\,\beta_m \,\ep(h u_{k m}).
\end{align*}
For each $j$, we use Lemma~\ref{lem:DoubleSum2} to derive 
\begin{align*}
    \sum_{m =H}^{M} & \sum_{L_m<k\le K/m}
\alpha_k\,\beta_m \,\ep(h u_{k m})
\\
&\ll
\sum_{I \leq j\leq J  } 
e^{j+1}\cdot \frac{K}{e^j}
\left(
e^{-(j+1)/2}+ \frac{e^{(j+1)/2} p^{1/4}}{(K/e^j)^{1/2}}+e^{(j+1)/2}p^{1/4}t^{-1/2}
\right)\\
&\qquad\qquad\qquad\qquad\qquad\qquad\qquad\qquad\qquad\qquad
(\log p)^{1/2}\log K\\
&\ll
\sum_{I \leq j\leq J  } 
K\left( e^{-j/2}+ e^j K^{-1/2} p^{1/4}+e^{j/2}p^{1/4}t^{-1/2}
\right)
(\log p)^{1/2}\log K\\
&\ll
K\left(
H^{-1/2}
+
M K^{-1/2}p^{1/4}
+
M^{1/2}p^{1/4}t^{-1/2}
\right)
(\log p)^{1/2} \log K, 
\end{align*}
which proves the result. 
\end{proof}

\section{Proof of Theorem~\ref{thm:SmoothSum3}}

\subsection{Combinatorial partition of the  sum} We set 
$$ 
L= Qp^{\varepsilon/4}.
$$
Clearly, we have  \begin{equation}\label{eq:L-bound}
Q\leq L\leq NQp^{-\varepsilon/8} 
\end{equation}
if $Q$ is large enough.

Let $p(s)$ denote the smallest prime divisor of an integer $s \ge 2$.

Following the idea of Vaughan~\cite[Lemma~10.1]{Vau}, we observe that
if  $n\in \cS(N,Q)$ with $n\geq L$, then $n$ can be written as
\begin{equation}\label{eq:rs}
n=r\cdot s, \quad \text{with } \qquad  L/Q\leq r < L, \quad  P(r)\leq p(s), \quad r  p(s)\geq L.
\end{equation}
One can have the representation~\eqref{eq:rs} by collecting  prime factors of $n$ into~$r$ starting from $p(n)$ and then using the rest of the prime factors in the increasing order, until we
reach $r \ge L/Q$.  We also see that $r <P(n) L/Q < L$.  Furthermore, we now choose $r$ to be the largest 
obtaining via the above procedure which still satisfies $r< L$. The maximality of $r$ implies that no 
remaining prime factors can be added to $r$ and thus $r p(s) \ge L$.  

We now associate with each $n$ a unique pair $(r,s)$ satisfying~\eqref{eq:rs}  and obtained via the 
above procedure.

Thus collecting the above pairs $(r,s)$ by the greatest prime factor $q$ of~$r$, we have 
\begin{equation}\label{T_h}
\begin{split} 
T_h(N,Q) &=   \sum_{\substack{L\leq n\leq N\\ P(n)\leq Q}}\ep\(hu_{n}\)+O(L)\\ 
&=
 \sum_{\substack{q\leq Q\\ q \text{ is prime}}} \ \sum_{\substack{L/Q\leq r\leq L\\ P(r)=q}} \ \sum_{\substack{N/L\leq s\leq N/r\\ p(s)\geq q\\ P(s)\leq Q}} \ep\(hu_{rs}\)+O(L). 
\end{split}
 \end{equation}
 
\subsection{Concluding the proof} 
Using the representation~\eqref{T_h} of the sum $T_h(N,Q)$, we apply Lemma~\ref{lemma:hyper} for every fixed $q$. 

To do so, we put $\balpha$ as the characteristic sequence of integers $s$ with $p(s)\geq q$ and $P(s)\leq Q$ and $\bbeta$ as the characteristic sequence of integers $r$ with $P(r)=q$. Then, for each fixed prime $q \le Q$,  Lemma~\ref{lemma:hyper} yields
\begin{align*}
 \sum_{\substack{L/Q\leq r\leq L\\ P(r)=q}} & \ \sum_{\substack{N/L\leq s\leq N/r\\ %% p(s)\geq q\\ 
 q \le p(s) \le P(s)\leq Q}} \ep\(hu_{rs}\)\\ 
 & \ll N  \left(\frac{Q^{1/2}}{L^{1/2}}+ \frac{L  p^{1/4}}{N^{1/2}} + \frac{L^{1/2}p^{1/4}}{ t^{1/2}} \right)
 (\log p)^{1/2} \log N\\
   & \ll N  \(p^{-\varepsilon/8}+ \frac{Qp^{1/4+\varepsilon/4}}{N^{1/2}} + \frac{Q^{1/2}p^{1/4+\varepsilon/8}}{ t^{1/2}} \)(\log p)^{1/2} 
 \log N.
\end{align*} 
Taking the summation over primes $q\leq Q$, we get 
\begin{align*}
  T_h(N,Q)
    &\ll  NQ   \left(p^{-\varepsilon/8}
    + \frac{  Qp^{1/4+\varepsilon/4}}{N^{1/2}} + \frac{Q^{1/2}p^{1/4+\varepsilon/8}}{ t^{1/2}} \right)
  \frac{(\log p)^{1/2} \log N}{\log Q} +L\\
 &\ll NQ p^{-\varepsilon/8}  (\log p)^{1/2} \log N
\end{align*}
by~\eqref{eq:L-bound}. As $N\leq p^B$, we get
$$
T_h(N,Q)\ll N^{1- \varepsilon /(8B)}Q (\log p)^{3/2}
$$
and the result follows.

\section{Remarks}

Certainly  the most challenging open question in 
this area  is to obtain nontrivial 
results in the case of the period $t < p^{1/2}$. For such short 
periods no nontrivial results are known  even in the 
case of sums~\eqref{eq:Sing Sum} over consecutive integers.
In particular, methods of additive combinatorics, which stem from 
the groundbreaking result of Bourgain,    Glibichuk and  Konyagin~\cite{BGK}, 
do not apply to these sum.

Using a modification of the arguments of this paper, one can also 
study the distribution of elements of  the sequence~\eqref{eq:Gen}
at the moment of time satisfying various arithmetic conditions. 
However, it appears 
that studying sparse subsequences, as those with polynomial arguments, that is, $u_{f(n)}$,
$n =1,2, \ldots$, where $f(X) \in \Z[X]$, requires substantially new ideas.

It is certainly interesting to obtain analogues of
our results for 
orbits of polynomial dynamical system $x \mapsto F(x)$,
with a permutation polynomial $F \in \F_p[X]$. Unfortunately, 
due to the rapid  degree growth of the iterates $F^n$ 
even in the case of single sums over consecutive intervals
the saving against the trivial bound is at most logarithmic,
see~\cite{NiSh0,NiWi}. In turn this lead to rather weak bounds 
that cannot be applied to exponential sums over smooth number or primes. 

However, in the multidimensional case, several
polynomial systems $\cF = \{F_1, \ldots ,F_{m}\}$  of $m$ polynomials 
in $m$ variables over   $\F_p$ have been constructed (see~\cite{Ost1,OstShp1,OstShp2}), that  generate 
a permutation map   on the $\F_p^m$ and such that the degree
of its iterations grows polynomially.   So it is quite conceivable that one 
can obtain analogues of the results of this paper as well as 
of~\cite{MeShp21}
for the polynomial systems with slow degree growth of~\cite{Ost1,OstShp1,OstShp2}
as well as for special systems of~\cite{BhShp,Ost2,OstShp3}.

 \section*{Acknowledgement}

During the preparation of this work, L.M. was was partially supported by NRDI (National Research Development and Innovation Office, Hungary) grant FK 142960 and by the J{\'a}nos Bolyai Research Scholarship of the Hungarian Academyof Sciences and
I.E was supported in part by 
the  Australian Research Council  Grants DP230100530 and DP230100534.

\end{document}